\newtheorem{theorem}{Theorem}[section]
\newtheorem{lemma}[theorem]{Lemma}
\newtheorem{problem}[theorem]{Problem}
\newtheorem{proposition}[theorem]{Proposition}
\newtheorem{corollary}[theorem]{Corollary}
\newtheorem{definition}[theorem]{Definition}
\newtheorem{remark}[theorem]{Remark}
\newcommand{\on}{\operatorname}
\newcommand{\G}{\mathcal G}
\newcommand{\F}{\mathcal F}
\newcommand{\K}{\mathcal K}
\newcommand{\N}{\mathbb N}
\newcommand{\R}{\mathbb R}
\newcommand{\mc}{\mathcal}
\newcommand{\mb}{\mathbf}
\newcommand{\m}{\mbox}
\newcommand{\pa}{\parallel}
\email {filip.strobin@p.lodz.pl}
\email{jswaczyna@wp.pl}
\subjclass[2010]{Primary: 28A80, Secondary: 37C25, 37C70, 54E52}
\keywords{fractals, iterated function systems, connectedness, Baire category, porosity}
\begin{document}
\author{Filip Strobin}
\address{Institute of Mathematics, Lodz University of Technology,
W\'olcza\'nska 215, 93-005 {\L}\'od\'z, Poland}
\author{Jaroslaw Swaczyna}
\address{Institute of Mathematics, Lodz University of Technology,
W\'olcza\'nska 215, 93-005 {\L}\'od\'z, Poland}
\title{Connectedness of attractors of a certain family of IFSs}
\date{}

\begin{abstract}
Let $X$ be a Banach space and $f,g:X\rightarrow X$ be contractions. We investigate the set
$$
C_{f,g}:=\{w\in X:\m{ the attractor of IFS }\F_w=\{f,g+w\}\m{ is connected}\}.
$$
The motivation for our research comes from papers of Mihail and Miculescu, where it was shown that $C_{f,g}$ is a countable union of compact sets, provided $f,g$ are linear bounded operators with $\pa f\pa,\pa g\pa<1$ and such that $f$ is compact. Moreover, in the case when $X$ is finitely dimensional, such sets have been intensively investigated in the last years, especially when $f$ and $g$ are affine maps. As we will be mostly interested in infinite dimensional spaces, our results can be also viewed as a next step into extending of such studies into infinite dimensional setting. In particular, unlike in the finitely dimensional case, if $X$ has infinite dimension then $C_{f,g}$ is very small set (at least nowhere dense) provided $f,g$ satisfy some natural conditions.
\end{abstract}
\maketitle

\section{Introduction}
\subsection{{\color{black}Basics of the Hutchinson-Barnsley theory}}
$\;$\\
Let $X$ be a complete metric space and $\mc{F}$ be a finite family of continuous selfmaps of $X$. Then $\F$ generates a natural mapping $\F:\K(X)\rightarrow \K(X)$:
\begin{equation*}
\F(D):=\bigcup_{f\in\F}f(D),
\end{equation*}
where $\K(X)$ is the space of all nonempty and compact subsets of $X$, considered as a metric space with the Hausdorff metric $H$:
$$
H(D,G):=\max\left\{\sup_{x\in D}\left(\inf_{y\in G}d(x,y)\right),\sup_{y\in
G}\left(\inf_{x\in D}d(x,y)\right)\right\}=\inf\{r>0:D\subset G(r)%
\mbox{ and
}G\subset D(r)\},
$$
where $A(r):=\{x\in X:\exists_{y\in A}:d(x,y)<r\}$.\\
It turns out that if each $f\in\F$ is a Banach contraction (i.e., if the Lipschitz constant $Lip(f)<1$), then so is $\F:\K(X)\to\K(X)$. Hence it satisfies the thesis of the Banach fixed point theorem, which (in view of the fact that $\K(X)$ is complete provided $X$ is complete) means that there is a set $A_\mc{F}\in\K(X)$ such that  $\F(A_\mc{F})=A_\mc{F}$, and for any $D\in\mb{K}(X)$, the sequence of iterates $\F^{(n)}(D)$ converges (in Hausdorff metric) to $A_\mc{S}$. In fact, we have even more.\\
Let $g:X\rightarrow X$ be a mapping. If for some nondecreasing function $\varphi :[0,\infty
)\rightarrow \mathbb{R}$ with $\varphi^{(n)}(t)\to 0$ for $t\geq 0$, we have 
\begin{equation}
\forall _{x,y\in X}\mbox{ } d(g(x),g(y))\leq \varphi (d(x,y)),  \label{rk}
\end{equation}
then we say that $f$ is a \emph{Matkowski contraction}.\\
The following facts are known. The proof of the first one can be found in \cite{Mat} (cf. also \cite{GJ}), and of the second one in \cite{J}.
\begin{proposition}
Let $X$ be a complete metric space. If $g:X\rightarrow X$ is a Matkowski contraction, then it satisfies the thesis of the Banach fixed point theorem.
\end{proposition}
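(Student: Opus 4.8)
The plan is to fix an arbitrary starting point $x_0\in X$, form the orbit $x_n:=g^{(n)}(x_0)$, and prove three things: that the orbit is Cauchy (hence convergent, by completeness), that its limit is a fixed point of $g$, and that $g$ has at most one fixed point. Together these give the full conclusion of the Banach fixed point theorem, namely a unique fixed point to which every orbit converges.

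The crucial preliminary observation is that $\varphi(t)<t$ for every $t>0$. Indeed, if $\varphi(t_0)\ge t_0$ for some $t_0>0$, then monotonicity of $\varphi$ yields $\varphi^{(n)}(t_0)\ge t_0>0$ for all $n$ by induction, contradicting $\varphi^{(n)}(t_0)\to 0$. A second elementary consequence, using $0\le d(g(x),g(y))\le\varphi(d(x,y))<d(x,y)$, is that $g$ is continuous and that $\varphi(s)\to 0$ as $s\to 0^+$.

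Next I would control consecutive distances. Writing $a_n:=d(x_n,x_{n+1})$, the contraction condition gives $a_n=d(g(x_{n-1}),g(x_n))\le\varphi(a_{n-1})$, and hence by monotonicity $a_n\le\varphi^{(n)}(a_0)\to 0$. To upgrade this to the Cauchy property, fix $\varepsilon>0$; since $\varphi(\varepsilon)<\varepsilon$ the number $\varepsilon-\varphi(\varepsilon)$ is positive, so I may choose $N$ with $a_N\le\varepsilon-\varphi(\varepsilon)$. I then claim $d(x_N,x_n)\le\varepsilon$ for all $n\ge N$, proved by induction on $n$: assuming $d(x_N,x_n)\le\varepsilon$, the triangle inequality and monotonicity give
\[
d(x_N,x_{n+1})\le a_N+d(g(x_N),g(x_n))\le a_N+\varphi(d(x_N,x_n))\le a_N+\varphi(\varepsilon)\le\varepsilon.
\]
Consequently $d(x_m,x_n)\le 2\varepsilon$ for all $m,n\ge N$, so $(x_n)$ is Cauchy. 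This induction is the heart of the argument and the step I expect to be the main obstacle, precisely because $\varphi$ is only assumed nondecreasing and may fail to be continuous; the estimate is arranged to use nothing beyond monotonicity, which is what makes it go through.

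Finally, by completeness $x_n\to x^\ast$ for some $x^\ast\in X$. Passing to the limit in $d(x^\ast,g(x^\ast))\le d(x^\ast,x_{n+1})+\varphi(d(x_n,x^\ast))$ and using $\varphi(d(x_n,x^\ast))\to 0$ shows $g(x^\ast)=x^\ast$, so a fixed point exists. Uniqueness is immediate: two distinct fixed points $x,y$ would satisfy $d(x,y)=d(g(x),g(y))\le\varphi(d(x,y))<d(x,y)$, a contradiction. Since the starting point $x_0$ was arbitrary and the fixed point is unique, every orbit converges to $x^\ast$, which is the assertion of the theorem.
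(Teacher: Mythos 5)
Your proof is correct. The paper does not prove this proposition itself --- it defers to Matkowski's original work \cite{Mat} --- and your argument is precisely the standard proof from that literature: the preliminary observation that $\varphi(t)<t$ for $t>0$, the key induction showing $d(x_N,x_n)\le\varepsilon$ for all $n\ge N$ from the choice $a_N\le\varepsilon-\varphi(\varepsilon)$ (which, as you note, uses only the monotonicity of $\varphi$ and so survives the possible discontinuity of $\varphi$), and the existence and uniqueness steps are all carried out correctly.
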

\begin{proposition}\label{p1}
Let $X$ be a metric space. If $\F$ is a finite family of Matkowski contractions of $X$, then $\F:\K(X)\to\K(X)$ is also a Matkowski contraction.
\end{proposition}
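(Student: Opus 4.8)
The plan is to split the argument into two essentially independent pieces: first a ``one map at a time'' estimate that controls $H(\F(D),\F(G))$ by the comparison functions of the individual maps, and then a purely real-variable lemma asserting that the pointwise maximum of finitely many comparison functions is again a comparison function.

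For the first piece I would record the elementary fact that the Hausdorff metric behaves well under unions,
$$
H\Big(\bigcup_{f\in\F}f(D),\bigcup_{f\in\F}f(G)\Big)\le \max_{f\in\F}H(f(D),f(G)),
$$
so that it suffices to estimate $H(f(D),f(G))$ for a single $f\in\F$. Write $\varphi_f$ for a nondecreasing function witnessing that $f$ is a Matkowski contraction (after replacing $\varphi_f$ by $\max(\varphi_f,0)$ we may assume it is $[0,\infty)$-valued). Fixing $x\in D$ and using compactness of $G$, the distance $\dist(x,G)$ is attained at some $y_0\in G$; since $\varphi_f$ is nondecreasing this yields $\inf_{y\in G}d(f(x),f(y))\le \inf_{y\in G}\varphi_f(d(x,y))=\varphi_f(\dist(x,G))\le\varphi_f(H(D,G))$. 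Taking the supremum over $x\in D$, and arguing symmetrically with the roles of $D$ and $G$ interchanged, gives $H(f(D),f(G))\le\varphi_f(H(D,G))$. Combining this with the union estimate and setting $\varphi:=\max_{f\in\F}\varphi_f$ (a nondecreasing function, as a maximum of finitely many nondecreasing functions), I obtain $H(\F(D),\F(G))\le\varphi(H(D,G))$ for all $D,G\in\K(X)$.

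It then remains to verify that this $\varphi$ is itself a comparison function, i.e.\ that $\varphi^{(n)}(t)\to 0$ for every $t\ge 0$; this is the main obstacle, since iterating a maximum does not interact simply with the individual iterations. First I would observe that each $\varphi_f$ satisfies $\varphi_f(s)<s$ for $s>0$ (otherwise monotonicity forces $\varphi_f^{(n)}(s)\ge s$, contradicting convergence to $0$), hence $\varphi(s)<s$ for $s>0$ as well, being a maximum of finitely many values each below $s$. Consequently, for fixed $t>0$ the orbit $t_n:=\varphi^{(n)}(t)$ is a nonincreasing sequence in $[0,\infty)$, so $t_n\downarrow L$ for some $L\ge 0$, and I must rule out $L>0$.

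Suppose $L>0$. Since $\varphi(L)<L$, the orbit can never equal $L$, so $t_n>L$ for all $n$; thus each $t_{n+1}=\max_{f}\varphi_f(t_n)>L$, and some single index $j$ satisfies $\varphi_j(t_n)>L$ for infinitely many $n$. Because $t_n\downarrow L$, for every fixed $s>L$ we eventually have $t_n<s$ along that subsequence, whence monotonicity gives $\varphi_j(s)\ge\varphi_j(t_n)>L$; that is, $\varphi_j(s)>L$ for \emph{all} $s>L$. But then the orbit of $\varphi_j$ starting from any $s_0>L$ stays strictly between $L$ and $s_0$, is strictly decreasing, and so cannot converge to $0$ — contradicting that $\varphi_j$ is a comparison function. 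Therefore $L=0$, $\varphi$ is a comparison function, and $\F:\K(X)\to\K(X)$ is a Matkowski contraction. The delicate point throughout is exactly this last step: monotone comparison functions need not be continuous, so the contradiction must be extracted from the behaviour of $\varphi_j$ immediately to the right of $L$ rather than from any fixed-point equation $\varphi_j(L)=L$.
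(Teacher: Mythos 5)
Your argument is correct and complete. There is nothing in the paper to compare it against directly: the authors do not prove Proposition \ref{p1} but simply refer to the paper of Jachymski and J\'o\'zwik cited as [JJ]. Your route is the standard one and, as far as I can tell, essentially the one taken in that reference: the union estimate $H(\F(D),\F(G))\le\max_{f\in\F}H(f(D),f(G))$, the single-map estimate $H(f(D),f(G))\le\varphi_f(H(D,G))$, and then the genuinely nontrivial real-variable lemma that a finite pointwise maximum of nondecreasing comparison functions is again a comparison function. You identify and handle the delicate point correctly: since the $\varphi_f$ need not be continuous, one cannot pass to a fixed-point equation at the limit $L$, and your extraction of a single index $j$ with $\varphi_j(s)>L$ for all $s>L$, followed by running the orbit of $\varphi_j$ from a point above $L$, is exactly the right fix. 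Two cosmetic remarks: the equality $\inf_{y\in G}\varphi_f(d(x,y))=\varphi_f(\dist(x,G))$ is not needed (and only the inequality $\le$, via the attained minimizer $y_0$, is what you actually use); and for the claim that the orbit $t_n$ is nonincreasing at $t=0$ you implicitly use $\varphi(0)=0$, which does follow from monotonicity together with $\varphi_f^{(n)}(t)\to0$, but deserves one line.
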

The above show that for a complete metric space $X$ and a finite family of Matkowski contractions $\mc{F}$, the function $\F:\K(X)\to\K(X)$ satisfies the thesis of the Banach fixed point theorem. This leads to the following definition:
\begin{definition}
\emph{Let $X$ be a complete metric space. Then every finite family of Matkowski contractions of $X$ is called an }iterated function system\emph{ (IFS is short).\\
If $\mc{F}$ is an IFS, then the fixed point of the mapping $\F:\K(X)\to\K(X)$ is called} a fractal\emph{ or }attractor in the sense of Hutchinson and Barnsley\emph{, and is denoted by $A_\mc{F}$}.
\end{definition}
Note that such a general notion of an IFS was considered in the literature (for example in \cite{Ha}, \cite{GJ}), but originally IFSs were defined as families of Banach contractions (\cite{H} and \cite{B}).

\subsection{{\color{black}Some notions of porosity}}
$\;$\\
Let $X$ be a Banach space and $M\subset X$.\\
We say that $M$ is \emph{c-porous}, if its convex hull $conv(M)$ is nowhere dense.\\
If $M$ is a countable union of c-porous sets, then we say that $M$ is \emph{$\sigma$-c-porous}.\\
We say that $M$ is \emph{strongly porous}, if
$$
\forall_{R>0}\;\forall_{x\in X}\;\forall_{\alpha\in(0,1)}\;\exists_{y\in X}\;\pa x-y\pa=R\;\mbox{ and }\;B(y,\alpha R)\cap M=\emptyset.
$$
If $M$ is a countable union of strongly porous sets, then we say that $M$ is \emph{$\sigma$-strongly porous}.\\
c-porosity and strong porosity were introduced and investigated in \cite{S}. In particular, by \cite[Proposition 2.8]{S}, we have that
\begin{proposition}\label{ppp}
If $M$ is c-porous, then it is strongly porous.
\end{proposition}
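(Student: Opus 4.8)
The plan is to prove the implication directly, exploiting that it is \emph{convexity}, not mere smallness, that the strong porosity condition really uses. Since $M$ is c-porous, $\conv(M)$ is nowhere dense, so $C:=\overline{\conv(M)}$ is closed, convex and has empty interior; equivalently, $X\setminus C$ is dense. The case $M=\emptyset$ is trivial, so assume $C\neq\emptyset$. Fix $R>0$, $x\in X$ and $\alpha\in(0,1)$, and fix once and for all an auxiliary constant $\beta\in(\alpha,1)$ that will create the slack. Since $M\subseteq C$, it suffices to produce $y$ with $\pa x-y\pa=R$ and $B(y,\alpha R)\cap C=\emptyset$.

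First I would separate a nearby exterior point. By density of $X\setminus C$, choose $z_0\notin C$ with $\pa x-z_0\pa<(\beta-\alpha)R$. As $C$ is closed and convex and $z_0\notin C$, the Hahn--Banach separation theorem (applied to the underlying real space if $X$ is complex) yields a continuous linear functional $\p$ with $\pa\p\pa=1$ and $\p(z_0)<c$, where $c:=\inf\{\p(q):q\in C\}$; thus $C\subseteq\{\p\ge c\}$. The reason for pulling $z_0$ close to $x$ is to control $\p(x)$: from $\pa\p\pa=1$ we get $\p(x)\le\p(z_0)+\pa x-z_0\pa<c+(\beta-\alpha)R$.

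Next I would slide a ball of radius $\alpha R$ out to the sphere $\{\pa x-\cdot\pa=R\}$ along the direction in which $\p$ decreases fastest. Because $\pa\p\pa=1$ we have $\inf_{\pa v\pa=1}\p(v)=-1$, so there is a unit vector $v$ with $\p(v)<-\beta$. Setting $y:=x+Rv$ gives $\pa x-y\pa=R$ exactly, and
\begin{equation*}
\p(y)=\p(x)+R\,\p(v)<\big(c+(\beta-\alpha)R\big)-\beta R=c-\alpha R.
\end{equation*}
Hence for every $w\in B(y,\alpha R)$ one has $\p(w)\le\p(y)+\pa w-y\pa<\p(y)+\alpha R<c$, so $w\notin C$. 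Therefore $B(y,\alpha R)\cap C=\emptyset$, and a fortiori $B(y,\alpha R)\cap M=\emptyset$, which is exactly the strong porosity condition.

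I expect the only genuinely delicate point to be the passage from ``$M$ is thin'' to ``an entire half-space avoids $M$''. Nowhere density of $M$ by itself would be useless here---a hyperplane is nowhere dense yet meets every ball of radius $\alpha R$---and it is precisely the nowhere density of the \emph{convex hull}, fed into Hahn--Banach strong separation, that converts the local thinness of $C$ into the global one-sidedness needed to park a large ball off $M$. Everything else (the existence of $v$ from $\inf_{\pa v\pa=1}\p(v)=-1$, and the three norm estimates) is routine once the constant $\beta\in(\alpha,1)$ has been inserted to produce the margin $(\beta-\alpha)R$.
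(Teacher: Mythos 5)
Your argument is correct and complete. Note that the paper itself does not prove this proposition; it only cites \cite[Proposition 2.8]{S}, so there is no internal proof to compare against. Your route --- pass to $C=\overline{\conv(M)}$, pick an exterior point $z_0$ within distance $(\beta-\alpha)R$ of $x$, strongly separate $z_0$ from $C$ by a norm-one functional $\p$, and then translate $x$ by $Rv$ along a unit vector $v$ with $\p(v)<-\beta$ --- is the standard Hahn--Banach separation argument for porosity of convex nowhere dense sets, and all the estimates ($\p(x)<c+(\beta-\alpha)R$, $\p(y)<c-\alpha R$, and $\p(w)<c$ for $w\in B(y,\alpha R)$) check out, as does the reduction from $M$ to $C$ via $M\subset\conv(M)\subset C$. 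The only points worth being explicit about, which you essentially already handle, are that $C$ must be nonempty for $c=\inf_C\p$ to be finite (covered by your trivial case $M=\emptyset$) and that the supremum of $\p$ over the unit sphere equals $\pa\p\pa=1$ even if it is not attained, which is all that is needed to find $v$ with $\p(v)<-\beta$.
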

\begin{remark}\emph{
Note that strong porosity can be viewed as a stronger version of $R$-ball porosity -- cf. \cite{Z2}.
It turns out that c-porosity and strong porosity are one of the most restrictive notions of porosity. 
In particular, they are meager, but the converse is not true. Hence if we know that some set is not only meager, but also $\sigma$-strongly porous, then we know that it is even smaller. For more information on porosity we refer the reader to survey papers \cite{Z1} and \cite{Z2}.}
\end{remark}
The following Proposition is trivial (cf. \cite{Z2}).
\begin{proposition}\label{P1}
Let $X$ be an infinite-dimensional Banach space and $M\subset X$. If $M$ is relatively compact (i.e., its closure is compact), then it is c-porous, and, in particular, strongly porous.
\end{proposition}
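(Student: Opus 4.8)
The plan is to establish c-porosity of $M$ directly and then obtain strong porosity for free from Proposition \ref{ppp}. Recall that c-porosity of $M$ means precisely that $\conv(M)$ is nowhere dense, equivalently that its closure $\overline{\conv(M)}$ has empty interior. The subtlety is that in an infinite-dimensional space $\conv(M)$ need be neither compact nor closed even when $M$ is relatively compact, so I would not argue about $\conv(M)$ itself but pass to its closure.

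First I would note that relative compactness of $M$ just means $\overline{M}$ is a compact subset of $X$. The key step is then to invoke Mazur's theorem: in a Banach space the closed convex hull of a compact set is compact. Applying it to $\overline{M}$ shows that $\overline{\conv(\overline{M})}$ is compact, and since $\overline{\conv(M)}$ is a closed subset of $\overline{\conv(\overline{M})}$, it too is compact.

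Next I would use the hypothesis that $X$ is infinite-dimensional. By the Riesz lemma the closed unit ball of an infinite-dimensional normed space is not compact, so no ball $B(x,r)$ with $r>0$ can be contained in a compact set; hence every compact subset of $X$ has empty interior. Applying this to the compact set $\overline{\conv(M)}$ yields $\inte \overline{\conv(M)}=\emptyset$, which is exactly the statement that $\conv(M)$ is nowhere dense, i.e. that $M$ is c-porous. Strong porosity then follows at once from Proposition \ref{ppp}.

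The only genuinely nontrivial ingredient here is Mazur's compactness theorem for closed convex hulls, and I would regard that as the crux of the argument: in a general metric space lacking completeness and linear structure the closed convex hull of a compact set need not be compact, so the Banach-space hypothesis is used precisely at this point. Everything else — the identification of nowhere density with emptiness of the interior of the closure, and the failure of local compactness in infinite dimensions — is elementary.
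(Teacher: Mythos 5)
Your proof is correct: Mazur's theorem gives compactness of $\overline{\conv(M)}$, the failure of local compactness in infinite dimensions gives it empty interior, and Proposition \ref{ppp} upgrades c-porosity to strong porosity. The paper omits the proof entirely (declaring the proposition trivial and citing \cite{Z2}), and your argument is exactly the standard one the authors have in mind, with the right emphasis on where completeness and infinite-dimensionality enter.
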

\subsection{{\color{black}Main problem}}
$\;$\\
Let $X$ be a Banach space and $f,g:X\rightarrow X$ be Matkowski contractions. If $w\in X$, then we define $g_w$ by setting $g_w(x):=g(x)+w$, and denote the IFS $\mc{F}_w:=\{f,g_w\}$ (clearly, $g_w$ is a Matkowski contraction). Define
\begin{equation*}
C_{f,g}:=\{w\in X:A_{\mc{F}_w}\m{ is connected}\}.
\end{equation*}

{\color{black}

In the case of $X=\R^2$, sets $C_{f,g}$ are (using the terminology from \cite[Section 8]{B}) \emph{Mandelbrot sets} for the family of all IFSs $\F_w$, as each IFS $\F_w$ depends on the parameter $w\in\R^2$. Such sets (with parameter space of also higher, but finite dimension) were investigated for example in \cite{B} and \cite{SS}. In particular (see \cite[Lemma 2.3]{SS}), if $f,g:\R^n\to\R^n$ are affine maps so that $|\on{det}(f)|+|\on{det}(g)|\geq 1$, then $C_{f,g}=\R^n$, and in some special cases (see \cite[Lemma 2.4]{SS}), concrete parts of the set $C_{f,g}$ are fully determined by properties of $f$ and $g$.


A related direction of studies are connected with the important concept of the so called \emph{tailings}.
Let $A$ be an $n\times n$ expanding matrix (that is, with all eigenvalues greater than $1$) and $D=\{d_1,...,d_k\}\subset \R^n$ be finite set. Then there exists a unique nonempty and compact set $T\subset \R^n$ such that
\begin{equation*}
A(T)=\bigcup_{i=1}^k(T+d_i)
\end{equation*}
(if $T$ has positive Lebesgue measure, then it is called a {self affine tail}). 
 In the last two decades there has been intensive effort to study various aspects of sets $T$. In particular, there has been undertaken the question for which sets $D$ it is connected (see for example  \cite{HSV}, \cite{KL} and \cite{LW}). 
In \cite{LW} it is shown that there exists a certain norm on $\R^n$ making all maps $x\to A^{-1}x+A^{-1}d_i$ Banach contracting and hence the set $T$ can be viewed as the attractor of the IFS $\{A^{-1}+A^{-1}d_i:i=1,...,k\}$. 
Therefore the family of sets $C_{A^{-1}+s,A^{-1}}$, where $s\in\R^n$, codes the set of all two-element sets $D$ for which $T$ is connected (in the last Section we will also deal with such families directly). 

Miculescu and Mihail in \cite{MM1} and \cite{MM2} made a step towards the infinite dimensional case. They proved that if $f,g$ are linear and $f$ is compact (that is, the closure of the image of each bounded set is compact) then the set $C_{f,g}$ is a countable union of compact sets (see \cite[Theorem 4.1]{MM2} and \cite[Theorem 6]{MM1}). In the case of finite dimensional spaces this result says nothing, but if $X$ is infinite-dimensional, then by Proposition \ref{P1} we see that that $C_{f,g}$ is $\sigma$-c-porous and hence meager. In fact, as $C_{f,g}$ is always closed (see Theorem \ref{T1}), it is nowhere dense in that case. This means that for most parameters $w\in X$,  the set $A_{f,g+w}$ is not connected.

 We prove similar assertions under different assumptions. In particular, we replace the linearity conditions by some other ones.

}




{
\begin{remark}\emph{
In \cite{S1} we investigated a bit more general set than $C_{f,g}$. Namely, instead of contractions $f$ and $g$, we considered families of contractions $\F$ and $\G$. In this paper, since dealing with just two contractions, we obtain some further results, or (sort of) strengthenings of results from \cite{S1}. Note that, in fact, the first draft of the current paper had been created earlier than \cite{S1}.}
\end{remark}

\section{Main results}

The following result are particular versions of \cite[Theorem 3.1]{S1} and \cite[Lemma 2.3(i)]{S1}
\begin{theorem}\label{T1}
Let $X$ be a Banach space and $f,g:X\rightarrow X$ be Matkowski contractions. Then $C_{f,g}$ is closed.
\end{theorem}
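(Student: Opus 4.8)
The plan is to deduce closedness from two ingredients: the continuous dependence of the attractor on the parameter $w$, and the elementary topological fact that a Hausdorff limit of connected compact sets is connected. Concretely, I would take a sequence $w_n\to w$ with each $A_{\F_{w_n}}$ connected, show that $A_{\F_{w_n}}\to A_{\F_w}$ in the Hausdorff metric $H$, and conclude that $A_{\F_w}$ is connected, so that $w\in C_{f,g}$.

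First I would record the topological lemma: if $K_n\to K$ in $(\K(X),H)$ and each $K_n$ is connected, then $K$ is connected. This is standard --- if $K=U\cup V$ were a separation into nonempty compact sets with $\dist(U,V)=:\eta>0$, then for large $n$ the bound $H(K_n,K)<\eta/3$ forces $K_n$ to meet both $U(\eta/3)$ and $V(\eta/3)$, two disjoint open sets covering $K_n$, contradicting connectedness of $K_n$.

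The core is continuity of $w\mapsto A_{\F_w}$. Here I would use that the Hutchinson operators $\F_w$ share a single comparison function: since $g_w(x)=g(x)+w$ has the same modulus of continuity as $g$, Proposition \ref{p1} provides a nondecreasing $\varphi$ with $\varphi^{(n)}(t)\to0$ such that $\F_w$ is a $\varphi$-Matkowski contraction for every $w$. The elementary estimates $H(D_1\cup E,D_2\cup E)\le H(D_1,D_2)$ and $H(S+w,S+w')\le\|w-w'\|$ yield, for every $D\in\K(X)$,
\begin{equation*}
H(\F_w(D),\F_{w'}(D))=H\big(f(D)\cup(g(D)+w),\,f(D)\cup(g(D)+w')\big)\le\|w-w'\|.
\end{equation*}
Writing $A=A_{\F_w}$, $A'=A_{\F_{w'}}$ and using $A=\F_w(A)$, $A'=\F_{w'}(A')$ together with the triangle inequality gives
\begin{equation*}
H(A,A')\le H(\F_w(A),\F_w(A'))+H(\F_w(A'),\F_{w'}(A'))\le\varphi\big(H(A,A')\big)+\|w-w'\|.
\end{equation*}

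Finally I would upgrade this inequality to genuine continuity. In the Banach-contraction case $\varphi(t)=Lt$ with $L<1$ it is immediate: the displayed inequality reads $H(A,A')\le\|w-w'\|/(1-L)$, so $w\mapsto A_{\F_w}$ is even Lipschitz. The main obstacle is the general Matkowski case, where one must pass from $t\le\varphi(t)+\delta$ (with $t=H(A,A')$, $\delta=\|w-w'\|$) to $t\to0$ as $\delta\to0$. This needs (i) an a priori bound on $H(A_{\F_{w_n}},A_{\F_w})$ uniform for $w_n$ in a bounded set, and (ii) the comparison-function lemma that $t_n\le\varphi(t_n)+\delta_n$ with $\{t_n\}$ bounded and $\delta_n\to0$ forces $t_n\to0$; this rests on $\varphi(s)<s$ for $s>0$ (a consequence of $\varphi$ being nondecreasing with $\varphi^{(n)}\to0$) together with $\inf_{s\in[\varepsilon,M]}(s-\varphi(s))>0$. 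This is precisely the delicate point treated in \cite{S1}; once it is in place, the three steps combine to show that $C_{f,g}$ is sequentially closed, hence closed.
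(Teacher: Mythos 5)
Your overall strategy --- uniform continuity of $w\mapsto A_{\F_w}$ combined with the fact that a Hausdorff limit of connected compacta is connected --- is exactly the route the paper relies on (it delegates Theorem \ref{T1} to \cite{S1} and records the continuity statement separately as Lemma \ref{LL1}), and your argument is complete and correct in the Banach-contraction case. The gap is in your step (ii). For a Matkowski comparison function ($\varphi$ nondecreasing with $\varphi^{(n)}(t)\to 0$) the quantity $\inf_{s\in[\varepsilon,M]}(s-\varphi(s))$ need \emph{not} be positive, because $\varphi$ may jump: take $\varphi(t)=t/2$ on $[0,1]$ and $\varphi(t)=1$ on $(1,\infty)$; this is nondecreasing, all its iterates tend to $0$, yet $s-\varphi(s)=s-1\to 0$ as $s\downarrow 1$. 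For this same $\varphi$ the sequences $t_n=1+\tfrac1n$, $\delta_n=\tfrac1n$ satisfy $t_n\le\varphi(t_n)+\delta_n$ with $(t_n)$ bounded and $\delta_n\to 0$, but $t_n\to 1$. So the comparison-function lemma you invoke is false as stated, and the one-step inequality $H(A,A')\le\varphi(H(A,A'))+\|w-w'\|$ cannot by itself yield continuity; it also fails to give the a priori bound of your point (i) (with $\varphi(t)=t-1/t$ for $t\ge 1$, suitably extended below $1$, the inequality $t\le\varphi(t)+\delta$ is consistent with every $t\ge 1/\delta$).

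The repair is to iterate the Hutchinson operator rather than use the fixed-point equation once. Write $A=A_{\F_w}$, $A'=A_{\F_{w'}}$, $\delta=\|w-w'\|$, and set $u_k:=H\bigl(\F_w^{(k)}(A'),A'\bigr)=H\bigl(\F_w^{(k)}(A'),\F_{w'}^{(k)}(A')\bigr)$. Your two estimates give
\begin{equation*}
u_{k+1}\le H\bigl(\F_w(\F_w^{(k)}(A')),\F_w(\F_{w'}^{(k)}(A'))\bigr)+H\bigl(\F_w(\F_{w'}^{(k)}(A')),\F_{w'}(\F_{w'}^{(k)}(A'))\bigr)\le\varphi(u_k)+\delta,\qquad u_0=0.
\end{equation*}
Fix $\varepsilon>0$; since $\varphi$ is nondecreasing with $\varphi^{(n)}(t)\to0$ one has $\varphi(\varepsilon)<\varepsilon$, so $\eta:=\varepsilon-\varphi(\varepsilon)>0$. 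If $\delta\le\eta$, induction gives $u_k\le\varepsilon$ for all $k$, and letting $k\to\infty$ (using $\F_w^{(k)}(A')\to A$) yields $H(A,A')\le\varepsilon$. This proves uniform continuity of $w\mapsto A_{\F_w}$ with no regularity assumption on $\varphi$ beyond monotonicity, and simultaneously supplies the boundedness you needed in (i). With this substitution your proof of Theorem \ref{T1} goes through.
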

\begin{lemma}\label{LL1}
Let $X$ be a Banach space and $f,g:X\to X$ be Matkowski contractions. Then the mapping $F(w)=A_{\F_w}$, $w\in X$, is uniformly continuous and, in particular, for any bounded $D\subset X$, the set $\bigcup_{w\in D}A_{\F_w}$ is bounded.
\end{lemma}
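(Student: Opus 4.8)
The plan is to estimate the Hausdorff distance between the attractors $A_{\F_w}$ and $A_{\F_{w'}}$ through the iterates of the Hutchinson operators, and to extract a modulus of continuity that is independent of $w,w'$. Throughout I write $\F_w(D)=f(D)\cup(g(D)+w)$ for the operator on $\K(X)$. First I would record two elementary facts. Since $g_w(x)=g(x)+w$ has the same comparison function as $g$, Proposition \ref{p1} supplies a single nondecreasing $\varphi$ with $\varphi^{(n)}(t)\to 0$ such that each $\F_w:\K(X)\to\K(X)$ is a Matkowski contraction with this same $\varphi$; in particular $\varphi$ does not depend on $w$. I may assume $\varphi(0)=0$, and I recall the standard fact that $\varphi(t)<t$ for every $t>0$ (otherwise monotonicity would force $\varphi^{(n)}(t)\geq t$, contradicting $\varphi^{(n)}(t)\to 0$). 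Second, since $\F_w(D)$ and $\F_{w'}(D)$ share the common part $f(D)$ and differ only by the translate $g(D)+w$ versus $g(D)+w'$, I would combine $H(A\cup B,A\cup C)\leq H(B,C)$ with $H(g(D)+w,g(D)+w')=\pa w-w'\pa$ to get the uniform estimate $H(\F_w(D),\F_{w'}(D))\leq\pa w-w'\pa$ for all $D\in\K(X)$.

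Next, I would fix any $D_0\in\K(X)$ and set $a_n:=H(\F_w^{(n)}(D_0),\F_{w'}^{(n)}(D_0))$. Writing $\F_w^{(n)}=\F_w\circ\F_w^{(n-1)}$ and inserting the intermediate set $\F_w(\F_{w'}^{(n-1)}(D_0))$, the Matkowski property of $\F_w$ and the translation estimate give the recursion $a_n\leq\varphi(a_{n-1})+\pa w-w'\pa$ with $a_0=0$. The uniform continuity then follows from a forward-invariance argument: given $\varepsilon>0$, put $\delta:=\varepsilon-\varphi(\varepsilon)>0$; if $\pa w-w'\pa\leq\delta$, then by induction and monotonicity of $\varphi$, $a_{n-1}\leq\varepsilon$ implies $a_n\leq\varphi(\varepsilon)+\delta=\varepsilon$, so $a_n\leq\varepsilon$ for all $n$. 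Letting $n\to\infty$ and using $\F_w^{(n)}(D_0)\to A_{\F_w}$ and $\F_{w'}^{(n)}(D_0)\to A_{\F_{w'}}$ together with continuity of $H$, I conclude $H(A_{\F_w},A_{\F_{w'}})\leq\varepsilon$; since $\delta=\varepsilon-\varphi(\varepsilon)$ depends only on $\varepsilon$, this is exactly uniform continuity of $F$ on all of $X$. I expect this to be the crux. The tempting shortcut of writing the single inequality $H(A_{\F_w},A_{\F_{w'}})\leq\varphi(H(A_{\F_w},A_{\F_{w'}}))+\pa w-w'\pa$ and solving it is \emph{not} enough, because $t-\varphi(t)$ may come arbitrarily close to $0$ at large $t$; running the recursion from $a_0=0$ and keeping the iterates trapped in $[0,\varepsilon]$ is what forces the small solution.

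Finally, the ``in particular'' clause follows from uniform continuity together with the convex, hence segment-connected, structure of $X$. Taking $\varepsilon=1$ I get $\delta_0=1-\varphi(1)>0$ with $\pa u-v\pa\leq\delta_0\Rightarrow H(F(u),F(v))\leq 1$. For bounded $D$ I fix $w_0\in D$ and $R:=\sup_{w\in D}\pa w-w_0\pa<\infty$; for $w\in D$ I would subdivide the segment from $w_0$ to $w$ into $N\leq\lceil R/\delta_0\rceil$ equal pieces of length at most $\delta_0$ and chain the estimate along the partition points to obtain $H(F(w_0),F(w))\leq N\leq\lceil R/\delta_0\rceil$. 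Thus every $A_{\F_w}$ with $w\in D$ lies in a fixed bounded neighborhood of the compact set $A_{\F_{w_0}}$, whence $\bigcup_{w\in D}A_{\F_w}$ is bounded. The only mild points to verify here are that the chaining constant $N$ is uniform over $D$ (it is, since $R<\infty$) and that a bounded Hausdorff-ball around a compact set is a bounded subset of $X$.
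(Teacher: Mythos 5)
Your proof is correct. Note that the paper does not actually prove Lemma \ref{LL1} in the text --- it imports it from \cite[Lemma 2.3(i)]{S1} --- so there is no in-paper argument to compare against; your route (the $w$-independent modulus $\varphi$ for all $\F_w$, the recursion $a_n\le\varphi(a_{n-1})+\pa w-w'\pa$ with forward invariance of $[0,\varepsilon]$ under $t\mapsto\varphi(t)+(\varepsilon-\varphi(\varepsilon))$, and chaining along segments to get boundedness on bounded sets) is the standard, complete way to obtain both assertions, and your observation that the one-shot inequality $H\le\varphi(H)+\pa w-w'\pa$ cannot simply be ``solved'' is exactly the right subtlety to flag.
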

}

The first main result of this paper is the following one. In particular, part $(i)$ (and its proof) is a direct extension of mentioned results of Miculescu and Mihail. {Also, the case when $Lip(g)<\frac{1}{2}$ follows from \cite[Theorem 3.2]{S1}.}
\begin{theorem}\label{m1}
Let $X$ be a Banach space and $f,g$ be Matkowski contractions of $X$. Assume that $f$ is a compact operator (i.e., $f(D)$ is relatively compact for all bounded $D$; $f$ need not be linear), and $g$ satisfies one of the following conditions:\\
(i) {\color{black}$g$ is affine and $\on{Lip}(g)< 1$};\\
(ii) $Lip(g)\leq \frac{1}{2}$.\\
Then the set $C_{f,g}$ is a countable union of {\color{black}compact} sets. In particular, if $X$ is infinite dimensional, then $C_{f,g}$ is $\sigma$-c-porous.
\end{theorem}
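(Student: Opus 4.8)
The plan is to show that each bounded portion of $C_{f,g}$ is relatively compact; since $C_{f,g}$ is closed by Theorem \ref{T1}, writing $C_{f,g}=\bigcup_{n}\big(C_{f,g}\cap \overline{B}(0,n)\big)$ then exhibits it as a countable union of compact sets. Fix $n$. By Lemma \ref{LL1} the set $D_n:=\bigcup_{w\in\overline{B}(0,n)}A_{\F_w}$ is bounded, so $K_n:=\overline{f(D_n)}$ is compact because $f$ is a compact operator. The starting point is the classical characterization of connectedness for a two-map system (Hata): $A_{\F_w}$ is connected if and only if its two pieces meet, i.e. $f(A_{\F_w})\cap g_w(A_{\F_w})\neq\emptyset$. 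Hence $w\in C_{f,g}$ exactly when $w=f(a)-g(b)$ for some $a,b\in A_{\F_w}$. Here $f(a)\in K_n$ is already under control; the whole difficulty is the term $g(b)$, which a priori ranges over a merely bounded (not relatively compact) set.

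To tame $g(b)$ I would exploit the self-similarity $A_{\F_w}=f(A_{\F_w})\cup g_w(A_{\F_w})$ through the address of $b$. Peeling off the leading occurrences of $g_w$, every $b\in A_{\F_w}$ is either of the form $b=g_w^{m}(e)$ with $e\in f(A_{\F_w})\subseteq K_n$ and $m\geq 0$, or equal to the fixed point $p_w$ of $g_w$. In the first case, using $g=g_w-w$ one computes $g(b)=g_w^{m+1}(e)-w$, and substituting into $w=f(a)-g(b)$ collapses to the clean relation $g_w^{M}(c)=d$ with $M:=m+1\geq 1$ and $c:=e,\ d:=f(a)\in K_n$. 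In the degenerate case $b=p_w$ one gets $p_w=f(a)$ and hence $w=f(a)-g(f(a))\in K_n-g(K_n)$, a compact set. Thus, setting $S_{n,M}:=\{w\in\overline{B}(0,n):\exists\,c,d\in K_n,\ g_w^{M}(c)=d\}$, we obtain the inclusion $C_{f,g}\cap\overline{B}(0,n)\subseteq \big(\bigcup_{M\geq 1}S_{n,M}\big)\cup\big(K_n-g(K_n)\big)$, and everything reduces to proving that each $S_{n,M}$ is relatively compact.

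The crux is therefore to control $w$ by the compact data $(c,d)$ through the equation $g_w^{M}(c)=d$, and this is where the two hypotheses on $g$ enter by different routes. Under (i) with $g$ affine, write $g(x)=Tx+v$ with $\|T\|=\on{Lip}(g)<1$; iterating gives $g_w^{M}(c)=T^{M}c+(I+T+\dots+T^{M-1})(v+w)$, so $w=(I-T^{M})^{-1}(I-T)(d-T^{M}c)-v$, which for fixed $M$ is a continuous image of the compact set $K_n\times K_n$ and hence ranges in a relatively compact set. Under (ii) with $\on{Lip}(g)\leq\tfrac12$, I would instead establish a two-sided Lipschitz estimate for the map $w\mapsto g_w^{M}(c)$: the upper bound $\on{Lip}(g_w^{M})\leq \on{Lip}(g)^{M}$ is immediate, while the recursion $a_M\geq \|w-w'\|-\on{Lip}(g)\,a_{M-1}$ (with $a_M:=\|g_w^{M}(c)-g_{w'}^{M}(c)\|$) together with the elementary upper bound on $a_{M-1}$ yields $a_M\geq \kappa_M\|w-w'\|$ for an explicit $\kappa_M>0$. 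Combining these two estimates along two solutions of $g_w^{M}(c)=d$ bounds $\|w-w'\|$ by $\|c-c'\|+\|d-d'\|$ up to a constant, so $w$ is a Lipschitz function of $(c,d)$ on the solution set and $S_{n,M}$ is again relatively compact.

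The main obstacle is precisely this reverse Lipschitz bound: its positivity forces the threshold $\on{Lip}(g)\leq\tfrac12$, since the uniform-in-$M$ constant $\frac{1-2\on{Lip}(g)}{1-\on{Lip}(g)}$ vanishes exactly at $\tfrac12$. The point to be careful about is that at the endpoint $\on{Lip}(g)=\tfrac12$ one only gets $\kappa_M=2^{1-M}>0$, which decays with $M$; this is harmless because $S_{n,M}$ is treated for each fixed $M$ separately and the final set is assembled as a countable union over $n$ and $M$. Finally, intersecting each relatively compact piece with the closed set $C_{f,g}$ produces compact sets whose countable union is $C_{f,g}$, proving it is $\sigma$-compact; when $X$ is infinite dimensional each such compact piece is relatively compact and hence c-porous by Proposition \ref{P1}, so $C_{f,g}$ is $\sigma$-c-porous.
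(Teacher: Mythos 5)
Your proposal is correct and follows essentially the same route as the paper: the peeling of leading $g_w$-iterates is exactly Lemma \ref{lemat1}, your sets $S_{n,M}$ are the paper's $M_{g,M,D_k}$, and the two compactness arguments (explicit inversion of $w\mapsto\sum_{i<M}g^{(i)}(v+w)$ in the affine case, and the reverse estimate with constant $1-\sum_{i=1}^{M-1}\on{Lip}(g)^i\geq 2^{1-M}$ in the case $\on{Lip}(g)\leq\frac12$) coincide with Lemma \ref{lemat2}, merely phrased as a Lipschitz bound rather than a Cauchy-sequence extraction. The only cosmetic differences are that you treat the affine case directly instead of first translating to the linear case, and that you intersect with closed balls at the outset.
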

Before we give the proof, let us state the corollary which is consequence of Theorem \ref{T1} and Theorem \ref{m1}.
\begin{corollary}\label{c1}
Assume that $X$ is infinite dimensional and $f,g$ are as in Theorem \ref{m1}. Then $C_{f,g}$ is nowhere dense.
\end{corollary}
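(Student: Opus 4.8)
The plan is to deduce the corollary purely formally from the two preceding results, Theorem \ref{m1} and Theorem \ref{T1}, by a Baire category argument; no further analysis of the maps $f,g$ themselves is required, since all the work has been done in establishing those two theorems.

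First I would record the consequence of Theorem \ref{m1}. Because $X$ is assumed infinite dimensional and $f,g$ satisfy its hypotheses, the ``in particular'' clause of Theorem \ref{m1} yields that $C_{f,g}$ is $\sigma$-c-porous; that is, $C_{f,g}=\bigcup_n M_n$ where each $M_n$ is c-porous. By definition of c-porosity, $\conv(M_n)$ is nowhere dense, and since $M_n\subset\conv(M_n)$, each $M_n$ is itself nowhere dense. Hence $C_{f,g}$ is a countable union of nowhere dense sets, i.e. it is meager (of the first Baire category).

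Next I would invoke Theorem \ref{T1}, which asserts that $C_{f,g}$ is closed. The final step is the elementary observation that, in a complete metric space --- and $X$, being a Banach space, is one --- a closed meager set is nowhere dense. Indeed, if such a set had nonempty interior it would contain an open ball, but an open ball in a complete space is not meager by the Baire category theorem, contradicting meagerness; therefore $\inte C_{f,g}=\emptyset$, and a closed set with empty interior is by definition nowhere dense.

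There is essentially no genuine obstacle at this level: the entire mathematical content sits in Theorems \ref{m1} and \ref{T1}. The only points requiring a word of care are the implication ``c-porous $\Rightarrow$ nowhere dense'' (immediate, as a set is contained in its convex hull) and the standard Baire-category fact that a closed meager subset of a complete space has empty interior; both are routine, and the corollary follows.
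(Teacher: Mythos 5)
Your argument is correct and matches the paper's proof in substance: the paper likewise combines the closedness from Theorem \ref{T1} with the meagerness implied by Theorem \ref{m1} (via the Baire category theorem) to conclude that $C_{f,g}$ has empty interior and is therefore nowhere dense. Your write-up merely spells out the routine steps (c-porous $\Rightarrow$ nowhere dense, closed $+$ meager $\Rightarrow$ empty interior) that the paper leaves implicit.
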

\begin{proof}
By Theorems \ref{T1} and \ref{m1}, $C_{f,g}$ is closed and has empty interior. The result follows.
\end{proof}
\begin{remark}\label{R1}\emph{
Relation between being strongly porous, nowhere dense and countable union of relatively compact sets in infinite-dimensional spaces looks as follows: each strongly porous set is nowhere dense and there exist strongly porous sets which are not countable unions of relatively compact sets (consider proper closed infinite-dimensional linear subspaces). In separable spaces countable unions of relatively compact sets need not be nowhere dense (for example, sets which are  countable and dense), however in nonseparable spaces each countable union of relatively compact sets is at least nowhere dense. Hence both Theorem \ref{m1} and Corollary \ref{c1} are needed to get a full picture of the situation {(in \cite{S1} we omitted the discussion on porosity)}. }
\end{remark}
Now we present prove of Theorem \ref{m1}. We have to start with several lemmas. The first one is an extension of analogous observation from \cite{MM1}, {and is a special case of \cite[Lemma 3.3]{S1}. We give the proof for the sake of completeness.}
\begin{lemma}\label{lemat1}
Let $X$ be a complete metric space and $f,g$ be Matkowski contractions. For $\F=\{f,g\}$, we have
\begin{equation}
g(A_{\mc{F}})=\bigcup_{n\in\N}g^{(n)}(f(A_{\mc{F}}))\cup\{e\},\label{w}
\end{equation}
where $e$ is the fixed point of $g$.
\end{lemma}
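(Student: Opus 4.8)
The plan is to exploit the self-similarity identity $A_{\mc F}=f(A_{\mc F})\cup g(A_{\mc F})$, which holds because $A_{\mc F}$ is the fixed point of the Hutchinson operator $\F:\K(X)\to\K(X)$. Writing $A:=A_{\mc F}$ for brevity, and using that $g$ acts pointwise on sets so that $g(P\cup Q)=g(P)\cup g(Q)$ for arbitrary $P,Q\subset X$, I would apply $g$ to this identity to get $g(A)=g(f(A))\cup g^{(2)}(A)$, and then iterate.

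First I would establish, by induction on $N$, the finite identity
\begin{equation*}
g(A)=\bigcup_{n=1}^{N}g^{(n)}(f(A))\;\cup\;g^{(N+1)}(A).
\end{equation*}
The base case $N=1$ is the computation above, and the inductive step is obtained by replacing the tail $g^{(N+1)}(A)$ with $g^{(N+1)}(f(A))\cup g^{(N+2)}(A)$, which follows by applying $g^{(N+1)}$ to the self-similarity identity.

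Next I would pass to the limit $N\to\infty$. The key analytic input is that the tail $g^{(N+1)}(A)$ shrinks to $\{e\}$: since $g$ is a Matkowski contraction with fixed point $e$ and $\varphi$ is nondecreasing, one has $d(g^{(n)}(x),e)=d(g^{(n)}(x),g^{(n)}(e))\leq\varphi^{(n)}(d(x,e))\leq\varphi^{(n)}(M)$ for every $x\in A$, where $M:=\sup_{x\in A}d(x,e)$ is finite because $A$ is compact, hence bounded. As $\varphi^{(n)}(M)\to 0$, the sets $g^{(n)}(A)$ converge to $\{e\}$ in the Hausdorff metric. From here both inclusions follow. For the inclusion $\supseteq$, note $g(A)\subseteq A$ yields $g^{(n)}(A)\subseteq A$, so $e=\lim_n g^{(n)}(a)\in A$ for any $a\in A$ (as $A$ is closed), giving $e=g(e)\in g(A)$; and every $g^{(n)}(f(A))$ lies in $g(A)$ by the finite identity. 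For $\subseteq$, take $y\in g(A)$: if $y\in\bigcup_{n=1}^{N}g^{(n)}(f(A))$ for some $N$ we are done, and otherwise the finite identity forces $y\in g^{(N+1)}(A)$ for all $N$, whence $d(y,e)\leq\varphi^{(N+1)}(M)\to 0$ and so $y=e$.

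The argument is almost purely formal set-manipulation, so I expect the only genuine (and minor) obstacle to be the justification that $g^{(N+1)}(A)\to\{e\}$ uniformly over $A$; this is precisely where the compactness (hence boundedness) of the attractor and the defining condition $\varphi^{(n)}(t)\to 0$ of a Matkowski contraction are used.
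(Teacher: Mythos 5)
Your proposal is correct and follows essentially the same route as the paper: the same iterated self-similarity identity $g(A)=\bigcup_{n=1}^{N}g^{(n)}(f(A))\cup g^{(N+1)}(A)$, the same two inclusions, and the same identification of the tail's limit with $\{e\}$ (you verify $g^{(n)}(A)\to\{e\}$ by a direct $\varphi^{(n)}$ estimate, while the paper simply notes that $\{e\}$ is the attractor of the IFS $\{g\}$ — an immaterial difference).
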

\begin{proof}
At first, observe that $A_{\mc{F}}=f(A_{\mc{F}})\cup g(A_{\mc{F}})$. Hence $g(A_{\mc{F}})=g(f(A_{\mc{F}})) \cup g^{(2)}(A_{\mc{F}})$ and $$A_{\mc{F}}=f(A_{\mc{F}})\cup g(f(A_{\mc{F}})) \cup g^{(2)}(A_{\mc{F}}).$$
Proceeding inductively, we have that for any $n\in\N$, \begin{equation}g(A_{\mc{S}})=\bigcup_{i=0}^{n-1} g^{i}(f(A_{\mc{F}})) \cup g^{(n)}(A_{\mc{F}}).\label{w1}
\end{equation}
We are ready to prove (\ref{w}). At first observe that $e\in A_{\mc{F}}$. Indeed, since $e$ is a fixed point of $g$, we have $e\in \F(\{e\})$, which implies that for every $n\in\N$, $e\in \F^{(n)}(\{e\})$. Since $\F^{(n)}(\{e\})\rightarrow A_{\mc{F}}$, we have $e\in A_{\mc{F}}$ and, consequently, $e\in g(A_{\mc{F}})$. Hence, by (\ref{w1}), we have the containing $"\supset"$ in (\ref{w}).\\
Now we prove $"\subset"$. Let $x\in g(A_{\mc{F}})$ and assume that $x\notin g^n(f(A_{\mc{F}}))$ for every $n\in\N$. By (\ref{w1}), $x\in g^{(n)}(A_{\mc{F}})$ for every $n\in\N$. Since $g^{(n)}(A_{\mc{F}})\rightarrow \{e\}$ (because $\{e\}$ is an attractor of IFS $\{g\}$), we have $x=e$. This ends the proof. 
\end{proof}
\begin{lemma}\label{lemat2}
Let $X$ be a Banach space and $g:X\rightarrow X$. Assume that $g$ satisfies one of the following conditions:
\begin{itemize}
\item [(i)] $g$ is linear and $\pa g\pa<1$;
\item [(ii)] $Lip(g)\leq \frac{1}{2}$.
\end{itemize}
Then for any $n\in\N$ and $D\in\mb{K}(X)$, the set
$$
M_{g,n,D}:=\{w\in X:D\cap g^{(n)}_w(D)\neq \emptyset\}
$$
is compact, where $g^{(n)}_w$ is the $n$-th iterate of $g_w$ (recall that $g_w(x)=g(x)+w$).
\end{lemma}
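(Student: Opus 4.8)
The plan is to realize $M_{g,n,D}$ as a continuous image of a compact set, so that compactness follows at once (continuous images of compacta are compact). The parametrization differs in the two cases, so I would treat them separately.

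For case (i) I would exploit linearity to get the closed form $g_w^{(n)}(y)=g^{(n)}(y)+S_n(w)$, where $S_n:=\sum_{k=0}^{n-1}g^{(k)}$ (with $g^{(0)}=I$). Then $w\in M_{g,n,D}$ exactly when $S_n(w)=x-g^{(n)}(y)$ for some $x,y\in D$. Since $\pa g\pa<1$, both $I-g$ and $I-g^{(n)}$ are invertible, and $S_n=(I-g^{(n)})(I-g)^{-1}$ is a bounded invertible operator; hence $M_{g,n,D}=S_n^{-1}\big(D-g^{(n)}(D)\big)$. As $D$ is compact and $g^{(n)}$, $S_n^{-1}$ are continuous, the set $D-g^{(n)}(D)$ is compact and so is its image under $S_n^{-1}$, finishing this case.

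For case (ii) linearity is unavailable, so for fixed $y\in D$ I would instead solve $g_w^{(n)}(y)=x$ for the \emph{parameter} $w$. Writing $g_w^{(n)}(y)=g\big(g_w^{(n-1)}(y)\big)+w$, this equation is equivalent to $w$ being a fixed point of $T_{x,y}(w):=x-g\big(g_w^{(n-1)}(y)\big)$. I would show $T_{x,y}$ is a contraction with ratio independent of $x,y$: setting $\Delta_k:=g_w^{(k)}(y)-g_{w'}^{(k)}(y)$ and $L:=\on{Lip}(g)$, the recursion $\pa\Delta_k\pa\le L\pa\Delta_{k-1}\pa+\pa w-w'\pa$ with $\Delta_0=0$ gives $\pa\Delta_{n-1}\pa\le\pa w-w'\pa\,\frac{1-L^{n-1}}{1-L}$, whence $\pa T_{x,y}(w)-T_{x,y}(w')\pa\le\theta\pa w-w'\pa$ with $\theta:=\frac{L(1-L^{n-1})}{1-L}$. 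By the uniform contraction principle each equation has a unique solution $w=\Lambda(x,y)$ depending continuously (jointly) on $(x,y)$, since $T_{x,y}(w)$ is continuous in $(x,y)$ for fixed $w$. Uniqueness of the fixed point identifies, for any $w\in M_{g,n,D}$, its witnessing pair via $w=\Lambda(x,y)$, so $M_{g,n,D}=\Lambda(D\times D)$ is the continuous image of the compact set $D\times D$, hence compact.

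The delicate point I expect to cost the most care is that the boundary value $L=\tfrac12$ is allowed. A crude estimate $\pa\Delta_{n-1}\pa\le\pa w-w'\pa/(1-L)$ only yields $\theta\le\frac{L}{1-L}$, which equals $1$ at $L=\tfrac12$ and gives merely a nonexpansive map, not a contraction. The resolution is to retain the exact finite geometric sum: since $\frac{L}{1-L}\le1$ for $L\le\tfrac12$ while the factor $1-L^{n-1}$ is strictly less than $1$, one obtains $\theta<1$ for every $n\ge1$, covering the borderline case (for $n=1$ one has $\theta=0$ and $w=x-g(y)$ directly). This separation of the linear case (i), which tolerates $\on{Lip}(g)$ up to $1$, from the Lipschitz case (ii), which needs the sharpened bound, is exactly why the two hypotheses are stated, rather than unified.
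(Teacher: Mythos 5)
Your proposal is correct. Case (i) is essentially identical to the paper's argument: the same closed form $g_w^{(n)}(y)=g^{(n)}(y)+h(w)$ with $h=(I-g^{(n)})(I-g)^{-1}$ invertible, and $M_{g,n,D}=h^{-1}\bigl(D-g^{(n)}(D)\bigr)$. For case (ii) you take a genuinely different route in packaging, though the heart of the matter is the same inequality. The paper argues sequentially: given $(w_k)\subset M_{g,n,D}$ with witnesses $x_k=g^{(n)}_{w_k}(y_k)$, it unrolls the iteration to obtain $(1-\alpha)\pa w_k-w_s\pa\le\pa x_k-x_s\pa+2^{-n}\pa y_k-y_s\pa$ with $\alpha=\tfrac12+\cdots+\tfrac1{2^{n-1}}$, concludes that $(w_k)$ is Cauchy after passing to convergent subsequences of $(x_k),(y_k)$, and then proves closedness of $M_{g,n,D}$ in a separate step. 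You instead observe that for each fixed $(x,y)\in D\times D$ the equation $g_w^{(n)}(y)=x$ is a fixed-point equation $w=T_{x,y}(w)$ for a contraction with the uniform ratio $\theta=1-2^{1-n}$ (your $\theta$ is exactly the paper's $\alpha$; the borderline $L=\tfrac12$ is handled by the same finite geometric sum), so the solution map $\Lambda$ is well defined and continuous by the uniform contraction principle, and $M_{g,n,D}=\Lambda(D\times D)$ is compact as a continuous image of a compactum. This buys you a single unified argument for relative compactness and closedness at once, and it additionally yields a uniqueness statement (each pair $(x,y)$ witnesses exactly one $w$) that the paper does not extract; the paper's version is more elementary in that it invokes no parametrized fixed-point theorem, only direct norm estimates. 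One small wording caveat: uniqueness identifies $w$ from a witnessing pair, not the pair from $w$ (a given $w$ may have several witnesses), but this does not affect the equality $M_{g,n,D}=\Lambda(D\times D)$, which only needs that every $w\in M_{g,n,D}$ equals $\Lambda(x,y)$ for some witness $(x,y)$.
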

\begin{proof}
Assume $(i)$ and let $ D \in K(X)$ and $n\in \N $. Then for every $x\in X$, we have
$$g^{(n)}_{w}(x)=g(g_w^{(n-1)}(x))+w=...=g^{(n)}(x)+g^{(n-1)}(w)+...+g(w)+w=g^{(n)}(x)+h(w),$$
for $h(w)=\sum^{n-1}_{i=0}{g^{(i)}(w)}$.
Clearly, $h$ is continuous and linear. Since $\pa g\pa<1$, it is also bijection and $h^{-1}$ is continuous (we have $h=(id-g^{(n)})(id-g)^{-1}$ and $(id-g^{(n)})$ and $(id-g)^{-1}$ are continuous and reversible; this follows for example from  \cite[Lemma 1, sec. 6, ch. VII]{K}).
Thus\\
$$M_{g,n,D}=\{w \in X: D \cap g^{(n)}_{w}(D) \neq \emptyset\}=\{w \in X:D\cap (g^{(n)}(D)+h(w))\neq \emptyset\}=$$ $$=\{w\in X: h(w)\in D-g^{(n)}(D)\}=h^{-1}(D-g^{(n)}(D)).$$
Hence $M_{g,n,D}$ is compact (as a continuous image of a compact set $D-g^{(n)}(D)$).\\
Now assume $(ii)$ and choose any $ D \in K(X)$ and $n\in N $. Let $(w_{k})\subset M_{g,n,D}$ be a sequence.
Then for any $k \in\N$, there exist $x_{k},y_{k}\in D$ such that $x_{k}=g^{(n)}_{w_{k}} (y_{k})=g(g^{(n-1)}_{w_k}(y_k))+w_k$. By choosing subsequences (we will not change index so the notation is more clear) we have $ x_{k} \rightarrow x $, $ y_{k} \rightarrow y $ for some $x,y \in X$ (recall that $D$ is compact).\\
If $n=1$, then each $w_k=x_k-g(y_k)$, hence $w_k\rightarrow x-g(y)$.\\
Assume that $n\geq 2$. We will show that $(w_{k})$ is Cauchy. In order to do so, let us take any $s,k \in \N$. We have
$$\pa w_{k}-w_{s} \pa=\pa x_{k} -g(g^{(n-1)}_{w_{k}} (y_{k}))-(x_{s}-g(g^{(n-1)}_{w_{s}} (y_{s}))) \pa \leq $$
$$\leq\pa x_{k} - x_{s} \pa + \pa g(g^{(n-1)}_{w_{k}}(y_{k})) -g(g^{(n-1)}_{w_{s}} (y_{s})) \pa \leq \pa x_{k} - x_{s} \pa + \frac{1}{2}\pa g^{(n-1)}_{w_{k}} (y_{k}) - g^{(n-1)}_{w_{s}} (y_{s}) \pa =$$ $$= \pa x_{k}-x_{s} \pa + \frac{1}{2} \pa g(g^{(n-2)}_{w_{k}} (y_{k})) + w_{k} -g(g^{(n-2)}_{w_{s}} (y_{s}))-w_{s} \pa \leq $$ $$\leq \pa x_{k}-x_{s} \pa + \frac{1}{2} \pa w_{k}-w_{s} \pa + \frac{1}{2} \pa g(g^{(n-2)}_{w_{k}}(y_{k})) - g(g^{(n-2)}_{w_{s}}(y_{s})) \pa \leq $$ $$\leq \pa x_{k}-x_{s} \pa + \frac{1}{2} \pa w_{k}-w_{s} \pa + \frac{1}{4} \pa g^{(n-2)}_{w_{k}}(y_{k}) - g^{n-2}_{w_{s}}(y_{s}) \pa \leq$$ $$\leq ... \leq \pa x_{k}-x_{s} \pa + \left( \frac{1}{2} + \frac{1}{4} + ... + \frac{1}{2^{(n-1)}} \right) \pa w_{k}-w_{s} \pa+\frac{1}{2^{n}} \pa y_{k}-y_{s} \pa.$$ 
Set $\alpha:=\left( \frac{1}{2} + \frac{1}{4} + ... + \frac{1}{2^{n-1}} \right)$. Then $\alpha<1$ and by the above calculations,
$$(1-\alpha) \pa w_{k} - w_{s} \pa \leq \pa x_{k} - x_{s} \pa + \frac{1}{2^{n}} \pa y_{k} - y_{s} \pa,$$ 
so
$$ \pa w_{k}-w_{s} \pa \leq \frac{1}{1-\alpha} \left(\pa x_{k} - x_{s} \pa + \frac{1}{2^{n}} \pa y_{k} - y_{s} \pa\right).$$
Since $(x_k)$ and $(y_k)$ are convergent, $(w_{k})$ is Cauchy, and therefore it is convergent.\\
All in all, every sequence of elements of $M_{g,n,D}$ has a convergent subsequence, which proves that it is relatively compact. It remains to show that $M_{g,n,D}$ is closed. Let $(w_k)\subset M_{g,n,D}$ and $w_k\rightarrow w$ for some $w\in X$. Then there are $x_k,y_k\in D$ such that $x_k=g^{(n)}_{w_k}(y_k)$. By choosing a proper subsequence, we can assume that $x_k\rightarrow x$ and $y_k\rightarrow y$ for some $x,y\in D$. By the continuity of $g$, we have that $g^{(n)}_{w_k}(y_k)\rightarrow g^{(n)}_w(y)$. On the other hand, $g^{(n)}_{w_k}(y_k)\rightarrow x$. Hence $x=g^{(n)}_w(y)$ and therefore $w\in M_{g,n,D}$
\end{proof}
We are ready to give proof of Theorem \ref{m1}.
\begin{proof}
{\color{black}At first observe that when dealing with (i), we can assume that $g$ is linear and $||g||<1$. Indeed, if $g$ is affine and $\on{Lip}(g)<1$, then $g=\tilde{g}+a$ for some linear $\tilde{g}$ with $||\tilde{g}||<1$ and some $a\in X$. Then $w\in C_{f,g}$ iff $A_{f,\tilde{g}+a+w}$ is connected iff $a+w\in C_{f,\tilde{g}}$ iff $w\in C_{f,\tilde{g}}-a$. Hence if $C_{f,\tilde{g}}$ is a countable union of compact sets, then so is $C_{f,g}$.}

For every $k\in \N$, put $D_{k}:=cl\left(f\left(\bigcup_{w\in B(0,k)}A_{\mathcal{S}_{w}}\right)\right)$ ($cl(\cdot)$ denotes the closure and $0$ is the zero element of $X$). Then each $D_k$ is compact. To see it, it is enough to show that $\bigcup_{w\in B(0,k)}A_{\mathcal{S}_{w}}$ is bounded. But this follows from Lemma \ref{LL1}.
By Lemma \ref{lemat1}, we have ($e_w$ is the fixed point of $g_w$)
$${\color{black}C_{f,g}}=\{w \in X:A_{\mathcal{S}_{w}}\mbox{ is connected}\}\subset \{w\in X:f(A_{\mathcal{S}_w})\cap g(A_{\mathcal{S}_w})\neq \emptyset\}=$$
$$ = \left\{ w \in X: f(A_{\mathcal{S}_{w}}) \cap \left( \bigcup_{n\in\N} g^{(n)}_{w}(f(A_{\mathcal{S}_{w}})) \cup \{e_{w} \}\right) \neq  \emptyset \right\}=$$
$$=\bigcup_{k\in\N}\left\{w\in B(0,k): f(A_{\mathcal{S}_{w}})\cap\left(\bigcup_{n\in\N}(g^{(n)}_{w}(f(A_{\mathcal{S}_{w}}))\cup\{e_{w}\}\right)\neq \emptyset \right\} \subset$$
$$ \subset\bigcup_{k\in\N} \left\{ w \in X:D_{k}\cap\left(\bigcup_{n\in\N}\left(g^{(n)}_{w}(D_{k})\right)\cup \{ e_{w} \} \right) \neq \emptyset \right\}=$$ $$=\bigcup_{k\in\N} \bigcup_{n\in\N}\{w \in X:D_{k}\cap g_{w}^{(n)}(D_{k})\neq \emptyset \} \cup \bigcup_{k\in\N}\{ w \in X: D_{k}\cap \{ e_{w} \} \neq \emptyset  \}= $$
$$
=\bigcup_{k\in\N}\bigcup_{n\in\N}M_{g,n,D_k}\cup\bigcup_{k\in\N}\{ w \in X: D_{k}\cap \{ e_{w} \} \neq \emptyset  \}=\bigcup_{k\in\N}\bigcup_{n\in\N}M_{g,n,D_k}\cup\bigcup_{k\in\N}\{ w \in X: e_w\in D_{k}\}.
$$ 
By Lemma \ref{lemat2}, each set $M_{g,n,D_k}$ is compact, hence to end the proof it is enough to show that for each compact set $D\subset X$, $\{w\in X:e_w\in D\}$ is relatively compact.\\
If $e_{w} \in D$, then due to $e_{w}=w+g(e_{w})$ (recall that $e_w$ is a fixed point of $g_w$), we have that $w=e_{w}-g(e_{w}) \in D-g(D)$. Hence $\{w\in X:e_w\in D\}\subset D - g(D)$ and the latter set is compact.
\end{proof}
The second main result of the paper is the following. {Note that the point (ii) follows from \cite[Theorem 3.6]{S1}, but point (i) needs some additional reasonings.}
\begin{theorem}\label{m2}
Let $X$ be an infinite dimensional Banach space and $f,g$ be Matkowski contractions such that both $f$ and $g$ are compact operators. Then
\begin{itemize}
\item[(i)] $C_{f,g}$ is strongly porous (in particular, nowhere dense).
\item[(ii)] $C_{f,g}$ is a countable union of compact sets, and in particular, $\sigma$-c-porous.
\end{itemize}
\end{theorem}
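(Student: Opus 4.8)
The plan is to run both parts off a single inclusion that is already implicit in the proof of Theorem \ref{m1}, but now exploiting the extra hypothesis that \emph{both} $f$ and $g$ are compact. Namely, for every $w\in C_{f,g}$ the attractor decomposes as $A_{\F_w}=f(A_{\F_w})\cup\bigl(g(A_{\F_w})+w\bigr)$, a union of two nonempty compact sets. A compact set that is the union of two disjoint nonempty compact (hence closed) sets is disconnected; so connectedness forces $f(A_{\F_w})\cap\bigl(g(A_{\F_w})+w\bigr)\neq\emptyset$, and any common point $f(a)=g(b)+w$ gives
$$
w\in f(A_{\F_w})-g(A_{\F_w}),\qquad w\in C_{f,g}.
$$
The decisive gain over Theorem \ref{m1} is that, since $f$ and $g$ are both compact operators, a difference $f(S)-g(S)$ with $S$ bounded is relatively compact; this is exactly what fails when only $f$ is compact.

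For part (ii) I would exhaust $X$ by closed balls and write $C_{f,g}=\bigcup_{k\in\N}\bigl(C_{f,g}\cap\overline{B}(0,k)\bigr)$. Fixing $k$ and setting $S_k:=\bigcup_{w\in B(0,k+1)}A_{\F_w}$, Lemma \ref{LL1} shows $S_k$ is bounded, so $\overline{f(S_k)}$ and $\overline{g(S_k)}$ are compact and hence $K_k:=\overline{f(S_k)}-\overline{g(S_k)}$ is compact. The displayed inclusion gives $C_{f,g}\cap\overline{B}(0,k)\subset K_k$. Since $C_{f,g}$ is closed by Theorem \ref{T1}, each $C_{f,g}\cap\overline{B}(0,k)$ is a closed subset of the compact set $K_k$, hence compact; thus $C_{f,g}$ is a countable union of compact sets. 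In the infinite dimensional case each such compact set is c-porous by Proposition \ref{P1}, so $C_{f,g}$ is $\sigma$-c-porous.

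For part (i) I would localise the same estimate and transfer a porosity hole. Fix $R>0$, $x\in X$, $\alpha\in(0,1)$; put $S:=\bigcup_{w\in B(x,2R)}A_{\F_w}$ (bounded by Lemma \ref{LL1}) and $K:=\overline{f(S)}-\overline{g(S)}$, compact as above, so that $C_{f,g}\cap B(x,2R)\subset K$. In an infinite dimensional space $K$ is c-porous (Proposition \ref{P1}), hence strongly porous (Proposition \ref{ppp}). Applying strong porosity of $K$ to these very parameters $R,x,\alpha$ produces $y\in X$ with $\pa x-y\pa=R$ and $B(y,\alpha R)\cap K=\emptyset$. Because $\alpha<1$ we have $B(y,\alpha R)\subset B(x,(1+\alpha)R)\subset B(x,2R)$, whence
$$
B(y,\alpha R)\cap C_{f,g}=B(y,\alpha R)\cap\bigl(C_{f,g}\cap B(x,2R)\bigr)\subset B(y,\alpha R)\cap K=\emptyset.
$$
As $R,x,\alpha$ were arbitrary, $C_{f,g}$ is strongly porous, and in particular nowhere dense.

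\textbf{Main obstacle.}
The delicate point is the transfer step in part (i): one must ensure that a hole coming from the strong porosity of the \emph{local} superset $K$ is genuinely a hole of $C_{f,g}$. This succeeds only because the inclusion $C_{f,g}\cap B(x,2R)\subset K$ is arranged on a ball of radius $2R$, strictly larger than $R$, so that the entire ball $B(y,\alpha R)$ (lying within distance $(1+\alpha)R<2R$ of $x$) stays inside the region controlled by $K$. The other thing to check is that $K$ is truly compact, which rests on Lemma \ref{LL1} together with compactness of \emph{both} $f$ and $g$; if only one operator were compact, as in Theorem \ref{m1}, the difference $f(S)-g(S)$ need not be relatively compact and this direct argument would collapse, which is precisely why part (ii) here is easier than in the mixed situation while part (i) still needs the localisation.
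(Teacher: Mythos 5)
Your proposal is correct and follows essentially the same route as the paper: the inclusion $C_{f,g}\cap D\subset \overline{f(S)}-\overline{g(S)}$ (the paper's Lemma \ref{lemat3}), compactness via Lemma \ref{LL1} and compactness of both operators, the ball $B(x,2R)$ localisation with the hole transferred through Propositions \ref{P1} and \ref{ppp} for part (i), and exhaustion by balls plus closedness of $C_{f,g}$ for part (ii). No substantive differences.
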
 
Please compare thesis of this Theorem with Remark \ref{R1}.
We start with a lemma.
\begin{lemma}\label{lemat3}
Let $X,f,g$ be as in the assumptions of Theorem \ref{m2}. Then for every bounded set $D\subset X$, the set $E_D:=\left\{w\in D:A_{\mathcal{F}_w}\mbox{ is connected}\right\}$ is relatively compact.
\end{lemma}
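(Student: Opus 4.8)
The plan is to exploit the elementary topological fact that a connected set cannot be written as a disjoint union of two nonempty closed subsets, combined with the self-referential decomposition of the attractor. First I would note that for every $w$ the attractor satisfies $A_{\F_w}=f(A_{\F_w})\cup g_w(A_{\F_w})$, where both $f(A_{\F_w})$ and $g_w(A_{\F_w})$ are nonempty compact (hence closed) sets. Consequently, if $A_{\F_w}$ is connected, these two pieces must overlap, so there exist points $x_w,y_w\in A_{\F_w}$ with $f(x_w)=g_w(y_w)=g(y_w)+w$. This yields the crucial representation $w=f(x_w)-g(y_w)$, with both arguments lying in the attractor $A_{\F_w}$.

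Next I would fix a bounded $D$ and set $B:=\bigcup_{w\in D}A_{\F_w}$, which is bounded by Lemma \ref{LL1}. To prove relative compactness of $E_D$, take an arbitrary sequence $(w_k)\subset E_D$. For each $k$, the connectedness of $A_{\F_{w_k}}$ provides, by the previous paragraph, points $x_k,y_k\in A_{\F_{w_k}}\subset B$ with $w_k=f(x_k)-g(y_k)$. Since $f$ is a compact operator and $(x_k)$ is bounded, the sequence $(f(x_k))$ has a convergent subsequence; since $g$ is a compact operator and $(y_k)$ is bounded, $(g(y_k))$ likewise has a convergent subsequence. Passing to a common subsequence along which $f(x_k)\to a$ and $g(y_k)\to b$, we obtain $w_k=f(x_k)-g(y_k)\to a-b$. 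Thus every sequence in $E_D$ admits a convergent subsequence, which is precisely the relative compactness of $E_D$.

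The argument is short because the hypothesis that \emph{both} $f$ and $g$ are compact lets us control the parameter $w$ directly through the single intersection equation, bypassing the iterated decomposition of $g(A_{\F})$ from Lemma \ref{lemat1} that was needed in the proof of Theorem \ref{m1} (where only $f$ was assumed compact). The one point requiring care is the appeal to Lemma \ref{LL1}, which guarantees that the union of attractors over the bounded parameter set $D$ is bounded, so that the compactness of $f$ and $g$ can legitimately be applied to $(x_k)$ and $(y_k)$. I expect this to be the only substantive ingredient beyond the connectedness observation; in particular, no quantitative estimates of Matkowski-contraction type are required here.
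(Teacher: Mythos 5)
Your proof is correct and uses essentially the same argument as the paper: connectedness of $A_{\F_w}=f(A_{\F_w})\cup g_w(A_{\F_w})$ forces the two compact pieces to intersect, giving $w\in f\bigl(\bigcup_{w\in D}A_{\F_w}\bigr)-g\bigl(\bigcup_{w\in D}A_{\F_w}\bigr)$, which is relatively compact by Lemma \ref{LL1} and the compactness of $f$ and $g$. The paper phrases this as the set inclusion $E_D\subset D\cap(K-E)$ with $K,E$ compact rather than via sequences, but the content is identical.
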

\begin{proof}
Let $K:=cl\left(f\left(\bigcup_{w\in D}A_{\mathcal{F}_w}\right)\right)$ and $E:=cl\left(g\left(\bigcup_{w\in D}A_{\mathcal{F}_w}\right)\right)$. By Lemma \ref{LL1}, $E$ and $K$ are compact. We have
$$
E_D\subset\left\{ w \in D: f(A_{\mathcal{F}_{w}}) \cap g_w(A_{\mathcal{F}_{w}}) \neq \emptyset \right\} \subset \left\{w \in D : K \cap (E+w) \neq \emptyset \right\}=
$$
$$
=\left\{ w \in D: w \in K-E \right\} = D\cap (K-E).
$$
\end{proof}
\begin{proof}(of Theorem \ref{m2})\\
We first prove $(i)$.\\
Let $R>0$, $x\in X$ and $\alpha\in(0,1)$. Put $D:=B(x,2R)$ and let $E_D$ be as in Lemma \ref{lemat3}. By this Lemma, $E_D$ is relatively compact, hence c-porous. By Proposition \ref{ppp}, there is $y\in X$ such that $\pa y-x\pa=R$ and $B(y,\alpha R)\cap E_D=\emptyset$. It is enough to show that $B(y,\alpha R)\cap C_{f,g}=\emptyset$. Let $z\in B(y,\alpha R)$. Since $z\notin E_D$, we have that either $z\notin B(x,2R)$ or $A_{\mathcal{F}_z}$ is not connected. But 
$$\pa x-z\pa\leq\pa x-y\pa+\pa y-z\pa<R+R=2R,$$
so $z \in B(x,2R)$ and $A_{\mathcal{F}_z}$ is not connected. This means that $z\notin C_{f,g}$.\\
Now we prove $(ii)$.\\
For every $k\in\N$, let $D_k:=B(0,k)$. Then $C_{f,g}=\bigcup_{k\in\N}E_{D_k}$, where $E_{D_k}$ is defined as in Lemma \ref{lemat3}. Thus the result follows from this Lemma and a fact that $C_{f,g}$ is closed.
\end{proof}
{\color{black}
\begin{remark}\emph{
The proof of Theorem \ref{m2} gives a simple criterion for $w\in X$ for which $A_{\F_w}$ is not connected in the case when $f,g$ are linear operators. At first observe that in this situation, for every $t\in\R$ and $w\in X$, we have $tA_{\F_w}=A_{\F_{tw}}$. Indeed, we have
\begin{equation*}
f(tA_{\F_w})\cup (g(tA_{\F_w})+tw)=t(f(A_{\F_w})\cup (g(A_{\F_w})+w))=tA_{\F_w}.
\end{equation*}
Hence we can deal only with $w$ belonging to the unit sphere $S_X$. Now if $\alpha:=\max\{||f||,||g||\}$, then for every $w\in S_X$, the attractor $A_{\F_w}$ is a subset of the closed ball $\overline{B}\left(0,M\right)$, for $M:=\frac{1}{1-\alpha}$. Indeed, as $f$ is linear, we see that $f\left(\overline{B}\left(0,M\right)\right) \subset \overline{B}(\left(0,M\right)$, and if $x\in \overline{B}(\left(0,M\right)$, then $$||g(x)+w||\leq ||g||||x||+1\leq \alpha M+1\leq M.$$
Hence $f(\overline{B}(0,M))\cup (g(\overline{B}(0,M))+w)\subset \overline{B}(0,M)$ and thus $A_{\F_{w}}\subset \overline{B}(0,M)$. Then, setting $K:=f(\overline{B}(0,M))$ and $E:=g(\overline{B}(0,M))$, we have that
$$
(C_{f,g}\cap S_X)\subset (K-E).
$$
(in particular, if $f,g$ are compact, then $C_{f,g}\cap S_X$ is compact).}
\end{remark}
}

\begin{problem}\emph{
It is natural to ask, how far can we go with weakening the assumptions on $f$ and $g$. Note that in view of Theorem \ref{T1}, if $C_{f,q}$ has an empty interior, then it is automatically nowhere dense. Our conjecture is that if $X$ is infinite dimensional, $f,g$ are Matkowski contractions such that $f$ is a compact operator, then $C_{f,g}$ is nowhere dense (\cite[Theorem 3.1]{MM2} suggests that it might be impossible to omit the assumption that $f$ is compact).}
\end{problem}

\begin{corollary}\label{t6}
Let $X$ be an infinite dimensional separable Banach space and $f,g$ be Matkowski contractions with $f$ compact and such that one of the following conditions hold:
\begin{itemize}
\item[(i)]  $Lip(g)\leq \frac{1}{2}$;
\item[(ii)] $g$ is compact;
\item[(iii)] $g$ is {\color{black}affine} and $||g||<1$.
\end{itemize}
Then the set
$$
W_{f,g}:=\{(s,w)\in X\times X:A_{\{f_s,g_w\}} \mbox{is connected}\}
$$
is closed and nowhere dense. 
\end{corollary}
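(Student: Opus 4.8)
The plan is to prove the two assertions separately, using throughout that for a \emph{closed} set being nowhere dense is the same as having empty interior.

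I would first show that $W_{f,g}$ is closed, mimicking Theorem \ref{T1}. Write $f_s(x):=f(x)+s$, in analogy with $g_w$. The key is that the two-parameter map $(s,w)\mapsto A_{\{f_s,g_w\}}$ is continuous. Indeed, the maps $f_s,g_w$ depend on their parameters $1$-Lipschitzly and uniformly on all of $X$, since $\sup_{x}\|f_s(x)-f_{s'}(x)\|=\|s-s'\|$ and likewise $\sup_x\|g_w(x)-g_{w'}(x)\|=\|w-w'\|$; moreover every IFS $\{f_s,g_w\}$ has exactly the same Matkowski moduli as $\{f,g\}$, because translating a map leaves $d(g_w(x),g_w(y))=d(g(x),g(y))$ unchanged. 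Hence the stability of Hutchinson--Barnsley attractors under uniform, uniformly contractive perturbation -- the same mechanism that yields Lemma \ref{LL1} -- gives continuity of $(s,w)\mapsto A_{\{f_s,g_w\}}$. Since a Hausdorff limit of connected compacta is connected, the connected members of $\K(X)$ form a closed subset of $\K(X)$, and therefore $W_{f,g}$, the preimage of this closed set under a continuous map, is closed.

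The heart of the nowhere-density part is the observation that each vertical slice of $W_{f,g}$ is one of the one-parameter sets already treated. Fixing $s\in X$,
$$
(W_{f,g})_s=\{w\in X:A_{\{f_s,g_w\}}\mbox{ is connected}\}=C_{f_s,g},
$$
and $f_s=f+s$ is still a compact operator. I would therefore apply the earlier results to the pair $(f_s,g)$: in cases (i) and (iii), Corollary \ref{c1} gives that $C_{f_s,g}$ is nowhere dense, while in case (ii), Theorem \ref{m2}(i) gives that $C_{f_s,g}$ is strongly porous, hence nowhere dense. Thus every vertical slice of $W_{f,g}$ has empty interior.

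Finally I would pass from ``all slices nowhere dense'' to ``$W_{f,g}$ nowhere dense''. This can be done directly: if $W_{f,g}$ had nonempty interior it would contain a basic open box $U\times V$ with $U,V$ nonempty and open, and then for any $s_0\in U$ the slice $(W_{f,g})_{s_0}=C_{f_{s_0},g}$ would contain the nonempty open set $V$, contradicting its empty interior; so $W_{f,g}$ has empty interior and, being closed, is nowhere dense. Alternatively, making use of the separability of $X$, one may invoke the Kuratowski--Ulam theorem: $W_{f,g}$ is closed, hence has the Baire property, so as $X$ is separable (thus second countable) and all its slices are meager, $W_{f,g}$ is meager in $X\times X$, and a closed meager set is nowhere dense. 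I expect the only delicate point to be the continuity of the two-parameter attractor map in the closedness step; once that and the slice identity $(W_{f,g})_s=C_{f_s,g}$ are established, the statement reduces cleanly to the one-parameter Corollary \ref{c1} and Theorem \ref{m2}.
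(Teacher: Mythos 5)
Your proposal is correct and follows essentially the same route as the paper: establish closedness by the same stability/continuity mechanism behind Theorem \ref{T1}, identify each slice $(W_{f,g})_s$ with $C_{f_s,g}$ (noting $f_s$ remains compact), apply Corollary \ref{c1} or Theorem \ref{m2}(i) to get that every slice has empty interior, and conclude that the closed set $W_{f,g}$ has empty interior and is therefore nowhere dense. The paper states this argument only in compressed form (deferring closedness to an external reference), so your write-up simply supplies the details it leaves implicit.
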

\begin{proof}
The fact that $W_{f,g}$ is closed can be proved in a similar way as in the case of set $W_{f,g}$ (in fact, the closedness was proved in \cite[Theorem 4,2]{S1}). Since every $(W_{f,g})_s$ has an empty interior, also $W_{f,g}$ must have empty interior.
\end{proof}


\end{document}